\newtheorem{thm}{Theorem}[section]
\theoremstyle{remark}
\newtheorem{rem} [thm]{Remark}
\theoremstyle{definition}
\newtheorem{clm}[thm]{Claim}
\newtheorem{que}[thm]{Question}
\DeclareMathOperator{\diam}{diam}
\newcommand{\N}{{\mathbb{N}}}
\newcommand{\Z}{\mathbb{Z}}
\newcommand{\tg}{\tilde{g}}
\begin{document}

\title[On Rigid Minimal Spaces]{On Rigid Minimal Spaces}
\author[J. Boro\'nski]{Jan P. Boro\'nski}
\address[J. Boro\'nski]{AGH University of Science and Technology, Faculty of Applied Mathematics, al. Mickiewicza 30, 30-059 Krak\'ow, Poland. -- and -- National Supercomputing Centre IT4Innovations, Division of the University of Ostrava, Institute for Research and Applications of Fuzzy Modeling, 30. dubna 22, 70103 Ostrava, Czech Republic}
\email{ boronski@agh.edu.pl}

\author[J.  \v Cin\v c]{Jernej  \v Cin\v c}
\address[J. \v Cin\v c]{AGH University of Science and Technology, Faculty of Applied Mathematics, al. Mickiewicza 30, 30-059 Krak\'ow, Poland -- and -- National Supercomputing Centre IT4Innovations, Division of the University of Ostrava, Institute for Research and Applications of Fuzzy Modeling, 30. dubna 22, 70103 Ostrava, Czech Republic}
\email{jernej.cinc@osu.cz}

\author[M. Fory\'{s}-Krawiec]{Magdalena Fory\'{s}-Krawiec}
\address[M. Fory\'{s}-Krawiec]{AGH University of Science and Technology, Faculty of Applied Mathematics, al. Mickiewicza 30, 30-059 Krak\'ow, Poland -- and -- National Supercomputing Centre IT4Innovations, Division of the University of Ostrava, Institute for Research and Applications of Fuzzy Modeling, 30. dubna 22, 70103 Ostrava, Czech Republic}
\email{maforys@agh.edu.pl}

\begin{abstract}
A compact space $X$ is said to be minimal if there exists a map $f:X\to X$ such that the forward orbit of any point is dense in $X$. We consider rigid minimal spaces, motivated by recent results of Downarowicz, Snoha, and Tywoniuk  [{\it J. Dyn. Diff. Eq.}, 2016] on spaces with cyclic group of homeomorphisms generated by a minimal homeomorphism, and results of the first author, Clark and Oprocha [{\em Adv. Math.}, 2018] on spaces in which the square of every homeomorphism is a power of the same minimal homeomorphism. We show that the two classes do not coincide, which gives rise to a new class of spaces that admit minimal homeomorphisms, but no minimal maps. We modify the latter class of examples to show for the first time existence of minimal spaces with degenerate homeomorphism groups. Finally, we give a method of constructing decomposable compact and connected spaces with cyclic group of homeomorphisms, generated by a minimal homeomorphism, answering a question in Downarowicz et al.
\end{abstract}

\maketitle
\section{introduction}
Given a compact space $X$, a map $f:X\to X$ is said to be \emph{minimal} if for every $x\in X$ the forward orbit $\{f^n(x):n\in\mathbb{N}\}$\footnote{$\mathbb{N}=\{0,1,2,\ldots\}$} is dense in $X$. A compact space is said to be minimal if it admits a minimal map. The best known examples of minimal spaces are the Cantor set, circle $\mathbb{S}^1$ and 2-torus $\mathbb{T}^2$. The classification of minimal spaces is a well-known open problem in topological dynamics, motivated by the fact that minimal dynamical systems are building blocks for all dynamical systems \cite{Birkhoff}. Even for manifolds the answer is unkown in dimension greater than $2$.  This also applies to $\mathbb{R}$-flows, with the still unresolved Gottschalk Conjecture: {\it $\mathbb{S}^3$ does not admit a minimal $\mathbb{R}$-flow}. The class of minimal maps contains both invertible and noninvertible elements, and a space may admit both minimal homeomorphisms and minimal noninvertible maps, either one or the other, or none. Auslander and Yorke \cite{AY} showed that the Cantor set admits both types of minimal maps, whereas Auslander and Katznelson \cite{AK} showed that $\mathbb{S}^1$ does not admit minimal noninvertible maps. Katok \cite{Katok}, and Fathi and Herman \cite{FH} showed that every compact connected manifold, which admits a smooth locally free effective action of the circle group, has a smooth minimal diffeomorphism, isotopic to the identity, and so all odd dimensional spheres admit a minimal diffeomorphism. Every manifold that admits a minimal flow admits also minimal homeomorphisms (see e.g. \cite{F}), as well as minimal noninvertible maps \cite{BCO}. Kozlowski and the first author \cite{BKoz} proved that if a~compact manifold of dimension at least $2$ admits a minimal homeomorphism, then it admits a minimal noninvertible map. Minimal sets for surface homeomorphisms were classified in \cite{JKP} and \cite{PX}. In \cite{KST} Kolyada, Snoha and Trofimchuk constructed minimal noninvertible maps on $\mathbb{T}^2$ (see also \cite{Sotola}). Their result was modified by Bruin, Kolyada and Snoha, who constructed in \cite{BKS} an example of a minimal $2$-dimensional space $X$, that has the fixed point property for homeomorphisms. 

This paper concerns spaces that have very simple homeomorphism groups, and yet they admit minimal maps. This direction of research was motivated by recent results of Downarowicz, Snoha, and Tywoniuk in \cite{DST}, and the first author, Clark, and Oprocha in \cite{BCO}. The authors of \cite{DST} constructed the first examples of minimal spaces with cyclic homeomorphism groups, generated by a minimal homeomorphism (called {\it Slovak spaces} in \cite{DST}). Moreover, they showed that some of such spaces do not admit minimal noninvertible maps - the first examples of minimal spaces without minimal noninvertible maps other than the circle. In \cite{BCO} the authors used an inverse limit approach to construct examples of a space $X$, such that the homeomorphism group $$\mathrm{H}(X)=\mathrm{H}_+(X)\cup \mathrm{H}_{-}(X),$$
with
$$\mathrm{H}_{+}(X)\cap \mathrm{H}_{-}(X)=\{\operatorname{id}_X\},$$
where $\mathrm{H}_+(X)$ is cyclic and generated by a minimal homeomorphism, and for every $g\in \mathrm{H}_{-}(X)$ we have $g^2\in \mathrm{H}_{+}(X)$ (in \cite{BCO} called {\it almost Slovak spaces}, by reference to \cite{DST}). These examples were used in \cite{BCO} to give a negative answer to the following problem: given two compact spaces $X$ and $Y$ admitting minimal homeomorphism, does the product $X\times Y$ admit a minimal homeomorphism as well? In \cite{SS} the authors improve on  this result to show that the product of two compact spaces admitting minimal homeomorphisms does not need to admit any minimal maps. Slovak and almost Slovak spaces give rise to a kind of paradox: {\it the spaces admit homeomorphisms under which all points travel almost everywhere, but at the same time they are so rigid that they admit very few homeomorphisms}. In the present paper, by exploiting the methods from \cite{BCO}, in Section \ref{section:degenerate} we push this paradox even further, by constructing first examples of minimal spaces with degenerate homeomorphism groups. 
\vspace{0.2cm}
\newline
{\bfseries Theorem \ref{thm:degenerate}. }{\it
	There exist uncountably many minimal spaces with degenerate homeomorphism groups. In addition, for any real number $r\geq 0$ there exists such a~space that admits a minimal map with topological entropy $r$. }
\vspace{0.2cm}

\noindent
Earlier, in Section \ref{sec:AlmostSlovakNotSlovak} we show for the first time examples of almost Slovak spaces that are not Slovak, and then we show that these spaces do not admit minimal noninvertible maps. This gives a new family of examples of minimal spaces without minimal noninvertible maps, and the first class that is neither $\mathbb{S}^1$ nor a Slovak space. 
\vspace{0.2cm}

\noindent
{\bfseries Theorem \ref{cor}. }{\it There exist minimal spaces without minimal noninvertible maps, that are neither $\mathbb{S}^1$ nor Slovak. }
\vspace{0.2cm}

\noindent
In Section \ref{section:decomposable}, answering Question 2 from \cite{DST} we show the existence of Slovak spaces that are decomposable; i.e. they decompose to a union of two distinct proper nondegenerate connected and compact subsets. This contrasts with indecomposability of the previously known examples in \cite{BCO} and \cite{DST}. 
\vspace{0.2cm}

\noindent
{\bfseries Theorem \ref{thm:decomposable}. }{\it
		Each of the following classes of spaces contains an uncountable family of decomposable continua.
	\begin{itemize}
		\item[(D1)] Slovak spaces,
		\item[(D2)] almost Slovak spaces that are not Slovak,
		\item[(D3)] minimal spaces with degenerate homeomorphism groups.
\end{itemize} }
\vspace{0.000001cm}

\noindent
We conclude this introduction with the following question that arises naturally from our study.
\begin{que}\label{q1}
For what groups $G$ does there exist a minimal connected and compact metric space $X$, such that the homeomorphism group $\mathrm{H}(X)$ is isomorphic to $G$? When can $X$ be chosen to be decomposable, or locally connected?
\end{que}
\noindent
Note that de Groot showed in \cite{deGroot} that every group can be represented as the group of isometries of a connected, locally connected, complete metric space, and if the group is countable then the space can be chosen to be a locally connected metric continuum \cite{GW}.
\section{Preliminaries}

\subsection{Basic notions and definitions}
Let $X$ be a compact metric space. A continuous map $T:X\rightarrow X$ is \textit{minimal} if there is no proper subset $M\subset X$ which is nonempty, closed and $T$-invariant. In other words, $T$ is minimal if the forward orbit $\{T^n(x): n \in \mathbb{N}\}$ is dense in $X$ for every point $x \in X$. By $\mathcal{O}(x) = \{T^i(x): i \in \mathbb{Z}\} $ we denote the orbit (backward and forward) of a point $x \in X$. A \textit{continuum} is a~compact connected nondegenerate metric space. A continuum is \textit{decomposable} if it is the union of two distinct proper subcontinua. A continuum which is not decomposable is said to be \textit{indecomposable}. A continuum is \textit{hereditarily indecomposable} if every subcontinuum is indecomposable. The \textit{composant} of a point $x \in X$ is the union of all proper subcontinua of $X$ containing $x$. A \textit{decomposition} $\mathcal{D} \subset \mathcal{P}(X)$ of a space $X$ is a partition of $X$ to pairwise disjoint nonempty sets. Decomposition $\mathcal{D}$ is an \textit{upper semicontinuous decomposition} if the elements of $\mathcal{D}$ are compact and for every $D \in \mathcal{D}$ and every open set $U \subset X$ containing $D$ there exists another open set $V\subset X$ containing $D$ such that for every $D'\in \mathcal{D}$ if $D'\cap V\neq \emptyset$ then $D'\subset U$. Let $\pi:X\to Y$ be a continuous surjection between two compact metric spaces and let $Y_{\star}$ denote the set of points $y\in Y$ whose fibers $\pi^{-1}(y)$ are singletons. Let $X_{\star}=\pi^{-1}(Y_{\star})=\{x\in X: \pi^{-1}(\pi(x))=x\}$. If $X_\star$ is dense in $X$, then $\pi$ is called \textit{almost one-to-one}. If $\{X_i\}_{i \in \mathbb{N}}$ is a sequence of topological spaces and $\{f_i\}_{i \in \mathbb{N}}$ is a sequence of mappings with $f_i:X_{i+1}\to X_i$ then the \textit{inverse limit} $\varprojlim (X_i,f_i)$ is a subset of $\prod_{i>0}X_i$ such that $x \in \varprojlim (X_i,f_i)$ if and only if $f_i(x_{i+1})=x_i$ for $i \in \mathbb{N}$. If $f_i=f$ and $X_i=X$ for every $i\in \N$ we define the \emph{natural extension} $\sigma_{f}:\varprojlim (X,f)\to \varprojlim (X,f)$ (or sometimes called \emph{shift homeomorphism}) by $\sigma_{f}(x_1,x_2,x_3,\ldots)=(f(x_1),x_1,x_2,\ldots)$. If a space can be represented as the inverse limit of arcs, then it is said to be  \textit{arc-like}. The \textit{pseudo-arc} is a topologically unique arc-like hereditarily indecomposable space \cite{B}. Together with the Cantor set, $\mathbb{S}^1$ and all finite sets it forms a family of building blocks for all topologically homogeneous planar compacta \cite{HO}. A nondegenerate compact space $X$ is said to be a \textit{Slovak space} if its group of homeomorphisms is of the form $H(X) = \{ T^n: n \in \mathbb{Z} \}$, where $T$ is minimal. A~compact metric space $X$ is an \textit{almost Slovak space} if its homeomorphism group has the following property:
$$
H(X) = H_+(X)\cup H_-(X),
$$
with 
$$
H_+(X)\cap H_-(X) = \{id_X\} 
$$
where $H_+(X)$ is cyclic and generated by a minimal homeomorphism, and for every $g \in H_-(X)$ we have $g^2 \in H_+(X)$.


\subsection{Almost Slovak spaces from inverse limits}\label{subsec:AlmostSlovak}
We start by recalling the main points of the construction from \cite{BCO}.  In \cite[Theorem 3.1]{BCO} the authors take a minimal homeomorphism $(C,h)$ of the Cantor set $C$, the minimal suspension $(X,f)$ of $(C,h)$ guaranteed by \cite{F} and use the methodology from \cite[Theorem 2.3]{BCO} to construct an almost Slovak space $\mathbb{Y}$ as an inverse limit of some properly constructed spaces $(Y_n, \gamma_n)$. The method used in the proof of Theorem 2.3 requires applying some perturbations on $(X,f)$. The points from the orbit $\mathcal{O}(q)$ of a chosen point $q \in X$ is replaced with a null sequence of arcs to get the almost one-to-one extension $(X,\widetilde{H})$ of the suspension $(X,f)$. A~continuous map $g:[-1,2]\rightarrow [-1,2]$ is defined as identity outside the interval $(0,1)$ such that its inverse limit on $[0,1]$ is the pseudo-arc. 
The function is then used while defining $Y_1 = \varprojlim(X,g_n)_{n=1}^{\infty}$, where the bonding functions are dependent on $g$ on some properly chosen open set. We may see $Y_1$ as $X$  with the point $q$ removed, and the resulting "hole" compactified by a pseudo-arc. The consecutive spaces $Y_n$ for $n\geq 2$ are defined analogously, that is in the $n$-th step we remove the points $\{f^k(q):k=-n,-n+1,....,n-1,n\} $ and compactify each hole in the space by a pseudo-arc. The mapping $\gamma_n:Y_{n+1}\rightarrow Y_n$ is a natural projection, collapsing two pseudo-arcs to two points for every $n \in \mathbb{N}$. Those are used to define $\mathbb{Y} = \varprojlim(Y_n,\gamma_n)$. Observe that the natural projection $\pi:\mathbb{Y}\rightarrow X$ is one-to-one onto every point but the orbit of $q$, and $\pi^{-1}(f^i(q))$ is the pseudo-arc for every $i \in \mathbb{Z}$. In the last part of the proof the authors construct a~minimal homeomorphism $H_{\mathbb{Y}}:\mathbb{Y}\rightarrow \mathbb{Y}$ as follows:
$$
H_{\mathbb{Y}}(y) = (\pi^{-1}\circ F \circ \pi)(y) \text{ if } \pi^{-1}(y)\notin \mathcal{O}(q)
$$
and extend the above definition on the points with $\pi^{-1}(y) \in \mathcal{O}(q)$. As a consequence of their construction it follows that all non-singleton fibers $\pi^{-1}(y)$ are pseudo-arcs and all those pseudo-arcs are contained in exactly one composant of $\mathbb{Y}$ which we denote by $W_{\mathbb{Y}}$.  This method can be also used to construct Slovak spaces, for example by choosing $g$ so that the inverse limit with $g|_{[0,1]}$ is homeomorphic to $\sin(1/x)$-curve. 


\section{Almost Slovak spaces that do not admit minimal noninvertible maps}\label{sec:AlmostSlovakNotSlovak}
In this section we shall adapt the approach from the proof of \cite[Theorem 3.1]{BCO}, to construct an almost Slovak space $Y$, which is not Slovak, and then we will show that they do not admit minimal noninvertible maps. Our aim is to construct orientation reversing homeomorphisms on the spaces from \cite{BCO}, and since any such homeomorphism must have a fixed point, we thus obtain almost Slovak spaces which are not Slovak.  
Recall that the suspension flow $(\Phi_t)_{t \in \mathbb{R}}$ over a homeomorphism $h~:~X\to X$ of a~compact space $X$ is the flow defined on the space $X\times [0,1]/_{(y,1)\sim (h(y),0)}$ as follows:
$$
\Phi_t(y,s) = (h^{\lfloor t+s \rfloor}(y), \{t+s\}),
$$
where $\lfloor x \rfloor$ and $\{ x \}$ denote the integer and the fractional part of $x$ respectively.  

\begin{thm}\label{A}
	There exists an almost Slovak space $Y$ that is not Slovak. 	
\end{thm}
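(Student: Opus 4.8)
The strategy is to mimic the inverse-limit construction of \cite[Theorem 3.1]{BCO} recalled in Subsection~\ref{subsec:AlmostSlovak}, but starting from a minimal suspension that carries an orientation-reversing symmetry. Concretely, I would first produce a minimal homeomorphism $(C,h)$ of the Cantor set together with an involution $\iota:C\to C$ conjugating $h$ to $h^{-1}$ (i.e. $\iota h \iota = h^{-1}$); such pairs exist, e.g. from symmetric Denjoy-type or adding-machine-type constructions, and $\iota$ has a fixed point (or one can arrange a point $q$ with $\iota(q)=q$, so that the orbit $\mathcal O(q)$ is $\iota$-invariant as a set, with $\iota(f^k(q))=f^{-k}(q)$ in the suspension). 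The suspension flow $(\Phi_t)$ of $h$ then admits a flow-reversing homeomorphism $R(y,s)=(\iota(y),1-s)$, which is an orientation-reversing homeomorphism $R:X\to X$ satisfying $R\Phi_t = \Phi_{-t}R$, and hence $R f R = f^{-1}$ where $f=\Phi_1$. Choosing $q$ to be a fixed point of $R$ in $X$ (a point on the section over a fixed point of $\iota$, at height $1/2$), the $R$-image of the bi-infinite orbit $\{f^k(q)\}_{k\in\mathbb Z}$ is itself.

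The second step is to carry this symmetry through the inverse-limit surgery. Since we remove, at stage $n$, exactly the symmetric finite set $\{f^k(q): -n\le k\le n\}$ and compactify each resulting hole by a copy of the pseudo-arc, the symmetry $R$ lifts to each $Y_n$: because the pseudo-arc is topologically homogeneous and the bonding maps are built from the same fixed $g$ on symmetrically placed open sets, one can choose the lifted homeomorphism $R_n:Y_n\to Y_n$ to be compatible with the projections $\gamma_n$, and thus obtain $R_{\mathbb Y}:\mathbb Y\to\mathbb Y$ with $\pi R_{\mathbb Y} = R\pi$ off the orbit of $q$, extended over the pseudo-arc fibers. By construction $R_{\mathbb Y}$ is orientation-reversing in the sense that it conjugates the minimal homeomorphism $H_{\mathbb Y}$ to its inverse: $R_{\mathbb Y} H_{\mathbb Y} R_{\mathbb Y} = H_{\mathbb Y}^{-1}$. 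In particular $R_{\mathbb Y}^2$ is a power of $H_{\mathbb Y}$ (one can arrange $R_{\mathbb Y}^2=\mathrm{id}$ on the Cantor part, and then the pseudo-arc homogeneity lets one fix it to be the identity, or at least an element of $H_+$), so $R_{\mathbb Y}\in H_-(Y)\setminus\{\mathrm{id}\}$. This shows $H(Y)\neq H_+(Y)$, so $Y$ is not Slovak. Finally I would invoke the almost-Slovak structure already established for spaces built by this method in \cite[Theorem 3.1]{BCO}: the only new homeomorphisms that can appear are those covering homeomorphisms of $X$ commuting with $f$ up to sign, and the rigidity argument there (using that the pseudo-arc composant $W_{\mathbb Y}$ and the factor $(X,f)$ are both rigid enough) shows $H(Y)=H_+(Y)\cup H_-(Y)$ with $H_+$ cyclic generated by $H_{\mathbb Y}$ and $g^2\in H_+$ for all $g\in H_-$; hence $Y$ is almost Slovak.

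\textbf{Main obstacle.} The delicate point is not the existence of the symmetry upstairs but showing it descends to a genuine homeomorphism of $\mathbb Y$ and that \emph{no further} homeomorphisms appear. Lifting $R$ requires that the pseudo-arc insertions at symmetric holes be glued in a symmetric way and that the chosen $g$ (hence the bonding maps $g_n$) be invariant under the relevant local reflection; one must check the fixed point of $R$ on $X$ lies at a place where the insertion is itself symmetric, which is why one takes $q$ at height $1/2$ over a fixed point of $\iota$. The harder half is the rigidity: one must rerun the classification-of-homeomorphisms argument of \cite{BCO} and verify it still forces every homeomorphism of $Y$ to project to a power of $f$ or its inverse on $X$ — i.e. that inserting pseudo-arcs does not create exotic self-homeomorphisms permuting the fibers in an unexpected way — and then that an orientation-reversing projection forces the homeomorphism to be $H_{\mathbb Y}^k$ composed with $R_{\mathbb Y}$. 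That an orientation-reversing homeomorphism of $X$ (a suspension, hence essentially a mapping-torus with a distinguished $\mathbb R$-direction) must have a fixed point — used implicitly to guarantee $H_-$ is nonempty and to pin down its structure — follows from the suspension being a compact connected space fibering over $\mathbb S^1$ with the reversing map inducing $z\mapsto \bar z$ on the base circle, which has a fixed point; the surgery keeps a fixed point of $R_{\mathbb Y}$ inside the pseudo-arc over $q$.
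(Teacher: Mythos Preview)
Your plan is essentially the paper's: start from a minimal suspension carrying an orientation-reversing involution $A$ that preserves the orbit $\{f^k(q)\}_{k\in\mathbb Z}$ setwise (sending $f^k(q)$ to $f^{-k}(q)$), lift $A$ through the pseudo-arc surgery of \cite{BCO} to get an element of $H_-(Y)\setminus\{\mathrm{id}\}$, and inherit the almost-Slovak structure from \cite[Theorem~3.1]{BCO}. The paper works with the $2$-adic solenoid and its algebraic involution $A(c,x)=(c^{-1},-x)$ from \cite{Kw}, but this is not an essential difference from your generic $(C,h,\iota)$.

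Where the paper is sharper is precisely at your ``main obstacle''. Appealing to homogeneity of the pseudo-arc does \emph{not} by itself produce a lift of $R$ that is simultaneously compatible with all the bonding maps $\gamma_n$; that is the real content here. The paper's device is to choose the map $g:[-1,1]\to[-1,1]$ (whose inverse limit is the pseudo-arc, via \cite{KOT}) to be \emph{odd}, extend it by $g(x)=-x$ outside $(-1,1)$, and then take the bonding maps $f_n^{(m)}$ themselves to equal $A$ off a shrinking clopen neighbourhood and $(c,x)\mapsto(c^{-1},g(x))$ on it. Thus each bonding map already \emph{is} the reflection (twisted by $g$ on the inserted arc), so the lift $a$ of $A$ to $Y$ is simply the coordinate shift on the inverse limit, automatically commuting with all the $\gamma_m$. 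One gets $a^2=\mathrm{id}_Y$ for free: $a^2$ agrees with $\mathrm{id}$ on the dense complement of the pseudo-arcs because $A^2=\mathrm{id}_X$. The minimal $H$ is then recovered by observing that $f_{n+1}^{(m)}\circ f_n^{(m)}$ equals $g^2$ on the inserted intervals and $\mathrm{id}$ elsewhere, which places one back exactly in the setting of \cite[Theorem~3.1]{BCO} (with $g^2$ in the role of the map whose inverse limit is the pseudo-arc). Finally, non-minimality of $a$ is argued not via a base-circle fixed-point as in your last paragraph, but directly: $a$ maps the pseudo-arc $\pi^{-1}(\{1\}\times I_0)$ to itself, and the pseudo-arc has the fixed-point property \cite{Hamilton}.
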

\begin{proof}
	Let $X$ be the $2$-adic solenoid. Let $(\Phi_t)_{t \in \mathbb{R}}$ be the suspension flow on $X$ over the $2$-adic odometer $h$. Let us represent the $2$-adic solenoid as $X = (\Lambda_2\times \mathbb{R})/\approx$, where $\Lambda_2$ is the $2$-adic Cantor set, seen as the group of $2$-adic integers, and $(c,x)\approx(c',x')$ if and only if $h(c)=c'$ and $x+1=x'$. By \cite{Kw} $X$ admits an orientation reversing algebraic homeomorphism $A$ induced by $A': \Lambda_2\times \mathbb{R}\to \Lambda_2\times \mathbb{R}$ given by 
	$$
	A'(c,x) = (c^{-1},-x).
	$$
	Note that $A^2=\operatorname{id}_X$ and $A$ fixes the arc-component of $q=(1,0)$. Choose $\alpha\notin\mathbb{Q}$ such that $10<\alpha<11$. Then $\Phi_{\alpha}$ is minimal \cite{F}. We are going to show that there exists a continuum $Y$, a minimal homeomorphism $H:Y\to Y$, a surjection $\pi:Y\to X$, and a nonminimal orientation reversing homeomorphism $a:Y\to Y$ such that:
	\begin{enumerate}[(i)]
		\item $H=\pi^{-1}\circ \Phi_\alpha\circ\pi$,
		\item $a=\pi^{-1}\circ A\circ\pi$, 
		\item $\pi$ is almost one-to-one,
		\item all non-singleton fibers $\pi^{-1}(q)$ are pseudo-arcs,
		\item there exists a composant $W\subset Y$ such that if $|\pi^{-1}(x)|>1$ then $\pi^{-1}(x)\subset W$.
		\item $\lim_{|i|\to \infty}\diam H^i\left(\pi^{-1}(x)\right)=0$ for all $x$.
	\end{enumerate}
	 First, by the proof of \cite[Theorem 2.3]{BCO} (see \cite[Remark 2.4]{BCO}) we obtain a space $X_\infty$, that is homeomorphic to $X$, and such that $\Phi_\alpha$ lifts to a minimal homeomorphism~ $F:X_\infty\to X_\infty$. The homeomorphism $F$ is semi-conjugate to $\Phi_\alpha$, and has a property that there exists and arc $I\subset X_\infty$ such that $\{F^k(I)\}_{k\in\mathbb{Z}}$ forms a null sequence. The semi-conjugacy is one-to-one outside of $\bigcup_{k\in\mathbb{Z}} \{F^k(I)\}$. We may assume that $X_\infty=X$ and $I=\{1\}\times [-2,2]$. Let $F^k(I)=\{1\}\times I_k$ for all $k\in\mathbb{Z}$. Without loss of generality we may assume that $I_k=[k\alpha-\frac{2}{1+k^2},k\alpha+\frac{2}{1+k^2}]$. 
	There exists a nested sequence of clopen sets $C_i \subset \Lambda_2$ such that $\bigcap_i C_i=\{1\}$.	Let $\tg:[-1,1]\rightarrow [-1,1]$ be a piecewise linear map determined by $\tg(-\frac{1}{3})=\tg(1)=-1$ and $\tg(\frac{1}{3})=\tg(-1)=1$. By \cite[Theorem 14]{KOT} we have a map $g: [-1,1]\rightarrow [-1,1]$ such that $g(x)=\tg(x)$ for all $x\in\{-1,-\frac{1}{3},\frac{1}{3},1\}$, and  such that $\varprojlim(g,[-1,1])$ is the pseudo-arc. Note that $\varprojlim(g^2,[-1,1])$ is also a pseudo-arc, since $\varprojlim(g,[-1,1])\simeq \varprojlim(g^2,[-1,1])$ (from now on we use $\simeq$ to denote homeomorphic spaces); see e.g. \cite[Corollary 2.5.11]{Engelking}. Extend $g$ to a~continuous surjection on $[-2,2]$, by putting $g(x)=-x$ for all $x\notin (-1,1)$. Now for each $n \in \mathbb{N}$ let $f_n^{(0)}\colon X\to X$ be a map such that:
	\begin{eqnarray*}
		f_n^{(0)}(c,x) &=& (c^{-1},g(x)) \text{ for } x\in I_0, c \in C_n, \\
		f_n^{(0)}(c,x) &=& A(c,x) \text{ otherwise. } 
	\end{eqnarray*}
	Let $X_0=\varprojlim ((X,f_n^{(0)})_{n=1}^\infty)$. Observe that for each $x\not\in I_0$ there exists a natural number $N$ and an open neighborhood $V$ of $(1,x)$, such that $f_n^{(0)}(c,x')=A(c,x')$ for all $(c,x')\in V$ and $n>N$. But since a finite number of intial coordinates does not affect the topological structure of an inverse limit space, we see that if $(x_1,x_2,x_3,\ldots)\in X_0$ and $x_j\not \in \{1\}\times I_0$ for every $j\in \N$, then a small neighborhood of $x$ is homeomorphic to $\Lambda_2\times (0,1)$. If, on the other hand, $x_j\in I_0$ for some (thus all) $j\in\N$
	then $x\in \varprojlim ((I_0,f_n^{(0)})_{n=1}^\infty)\simeq \varprojlim (I_0,g)=P_0$, which is a unique maximal pseudo-arc embedded in $X_0$. In addition, by the above we get a homeomorphism $a_0:X_0\to X_0$ which is semi-conjugate to $A$, such that $a_0|_{(X_0\setminus P_0)}$ is conjugate to $A|_{(X\setminus \{1\}\times I_0)}$ and $a_0|_{P_0}$ is conjugate to the natural extension $\sigma_g$. 
	We may view $X_0$ as $X$ with $I_0$ replaced by $P_0$. 
	Let  $\xi_0\colon X_0\to X$ be the projection onto the first coordinate of the inverse limit space $ \varprojlim ((X,f_n^{(0)})_{n=1}^\infty)$.  Note that $\xi_0$ is one-to-one, except on $P_0\subset X_0$, with  $\xi_0(P_0)=I_0$. 
	
	Now suppose we have already defined the continua $X_m=\varprojlim ((X,f_n^{(m)})_{n=1}^\infty)$, the finite family of pseudo-arcs $\mathcal{P}_m=\{P_j:j=-m,-m+1,\ldots, m-1,m\}$ and projections $\xi_m:X_m\to X$, which are one-to-one except on $\bigcup \mathcal{P}_m$, such that $\xi_m(P_j)=I_j$, for all $j\in \{-m,-m+1,\ldots, m-1,m\}$, and $\diam (P_j)=I_j$. For every positive integer $m$ we have a natural projection $$\gamma_{m-1} \colon X_{m}\to X_{m-1}$$ which maps pseudo-arcs $P_{-m},P_{m}$ onto arcs $\xi^{-1}_{m}(I_{-m})$ and $\xi^{-1}_{m}(I_{m})$ respectively. We also have a family of orientation reversing homeomorphisms $(a_i:i=0,\ldots,m)$ such that for all $i=0,\ldots,m$ the following properties are satisfied:
	\begin{itemize}
	\item $a_i:X_i\to X_i$ is semi-conjugate to $A$ via $\xi_i$,
	\item $a_j:X_j\to X_j$ is semi-conjugate to $a_{j-1}$ via $\gamma_j$ for all $0< j\leq i$,
	\item $a_i|_{(X_i\setminus \bigcup\mathcal{P}_i)}$ is conjugate to $A|_{(X\setminus \bigcup_{j=-i}^{i}(\{1\}\times I_i))}$, and 
	\item $a_i|_{\bigcup\mathcal{P}_i}$ is conjugate to $\sigma_g$. 
	\end{itemize}
	To define $X_{m+1}$ we define $f^{(m+1)}_n\colon X_m\to X_m$ as:
		$$ f_n^{(m+1)}(c,x) = (c^{-1},F^{-j}\circ g\circ F^{-j}(x)),$$ for  $x\in I_j, c \in F^j(C_n), j=-m-1,\ldots,m+1,$ and
		$$f_n^{(m+1)}(c,x) = A(c,x),$$ otherwise. We define $X_{m+1}=\varprojlim ((X,f_n^{(m+1)})_{n=1}^\infty)$ and $P_{j}=\varprojlim ((I_j,f_n^{(m+1)})_{n=1}^\infty)$ for $j=-m-1,\ldots, m+1$. 
	 Furthermore observe that we have a natural projection $$\gamma_m \colon X_{m+1}\to X_m$$ which maps pseudo-arcs $P_{-m-1},P_{m+1}$ onto arcs $\xi^{-1}_{m+1}(I_{-m-1})$ and $\xi^{-1}_{m+1}(I_{m+1})$ respectively. Let $Y=\varprojlim ((X_{m-1},\gamma_m)_{m=0}^\infty)$, where $X_{-1}=X$, and let $\pi \colon Y\to X$ be the projection from the inverse limit space onto the coordinate space $X_{-1}$. Observe that $\pi^{-1}(\{1\}\times I_k)$ is a pseudo-arc for every $k\in\mathbb{Z}$ and $\pi$ is one-to-one on $Y\setminus\bigcup_{k\in\mathbb{Z}} \pi^{-1}(\{1\}\times I_k)$. All composants of $Y$ are continuous one-to-one images of the real line, except the composant $W=\pi^{-1}(\mathcal{C}_{(1,0)})$, where $\mathcal{C}_{(1,0)}$ is the composant of $(1,0)$ in $X$, which contains countably many pseudo-arcs connected by arcs. We define the homeomorphism $a:Y\to Y$ by $a=(a_0,a_1,a_2,\ldots)$. Homeomorphism $a$ is well defined since the homeomorphisms $(a_m:m=0,1,\ldots)$ commute with the bonding maps $(\gamma_m:m=0,1,\ldots)$.  Note that $a=\pi^{-1}\circ A\circ\pi$. It follows from the construction that $a^2=\mathrm{id}_Y$ since $a^2$ agrees with $\mathrm{id}_Y$ on a dense set by the fact that $A^2=\mathrm{id}_X$. Now we argue that the homeomorphism $a:Y\to Y$ is nonminimal. But this is easy to see, since $a(\pi^{-1}(\{1\}\times I_0))=\pi^{-1}(\{1\}\times I_0)$ and because $\pi^{-1}(\{1\}\times I_0)$ is a pseudo-arc that has the fixed point property \cite{Hamilton} it follows that $a$ has a fixed point.  
	 
	 Next we define the homeomorphism $H\colon Y\to Y$. Since we have
	 $$X_m=\varprojlim ((X,f_n^{(m)})_{n=1}^\infty)\simeq \varprojlim ((X,f_{n+1}^{(m)}\circ f_n^{(m)})_{n=1}^\infty),$$
	 and 
	 $$f_{n+1}^{(m)}\circ f_n^{(m)}(x)=x$$ for every $x$ outside of a small neighborhood of $\bigcup_{j=-m}^{m} (\{1\}\times I_j)$, and $$f_{n+1}^{(m)}\circ f_n^{(m)}(x)=g^2(x)$$ for $x\in\bigcup_{j=-m}^{m} (\{1\}\times I_j)$ we argue analogously as in the proof of \cite[Theorem 3.1]{BCO} to get a minimal homomorphism $H$ that is an almost one-to-one extension of $F$, and the proof is complete. 	 
	\end {proof}
	
Now we shall show that the above almost Slovak spaces, in fact all constructed in \cite[Theorem 3.1]{BCO}, do not admit minimal noninvertible maps. 
\begin{thm}\label{hom}
	Every minimal map on $Y$ is a homeomorphism.
\end{thm}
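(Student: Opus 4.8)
The plan is to show that a minimal $T\colon Y\to Y$ must respect the rigid skeleton of $Y$ and then is forced to coincide with a power of $H$. Write $\mathcal A=\bigcup_{k\in\mathbb Z}P_k$ with $P_k=\pi^{-1}(\{1\}\times I_k)$; recall $\pi$ is one‑to‑one off $\mathcal A$, so (being a closed map) $\pi$ restricts to a homeomorphism of the dense open set $Y\setminus\mathcal A$ onto $X':=X\setminus\bigcup_k(\{1\}\times I_k)$, where $X$ is the $2$‑adic solenoid.

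\emph{Step 1 (the pseudo‑arcs are topologically canonical).} I would first prove that $\{P_k:k\in\mathbb Z\}$ is exactly the family of maximal nondegenerate hereditarily indecomposable subcontinua of $Y$. This uses that every proper subcontinuum of the $2$‑adic solenoid is an arc or a point: for a nondegenerate subcontinuum $C\subseteq Y$, $\pi(C)$ is a point, an arc, or all of $X$; and if $\pi(C)$ is an arc meeting the dense regular set $X\setminus\bigcup_k(\{1\}\times I_k)$, then a cut point of $\pi(C)$ lying in that set (over which $\pi$ is injective) pulls back to a cut point of $C$, so $C$ is decomposable. Hence a hereditarily indecomposable $C$ has $\pi(C)$ a point or an arc inside a single $\{1\}\times I_k$ (these being pairwise disjoint closed arcs), whence $C\subseteq P_k$; and each $P_k$ is a pseudo‑arc, maximal among these. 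In particular every homeomorphism of $Y$ permutes $\{P_k\}$, and $H(P_k)=P_{k+1}$.

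\emph{Step 2 (a minimal map permutes $\{P_k\}$ as a shift).} Let $T$ be minimal. Since $T$ has no periodic points and the pseudo‑arc has the fixed point property \cite{Hamilton}, no iterate of $T$ carries a $P_k$ into itself. Next, $T(P_k)$ is a nondegenerate hereditarily indecomposable subcontinuum: nondegeneracy because a minimal map is semi‑open together with Step 1 and density of the orbit of a point of $P_k$, and hereditary indecomposability by Step 1 applied to $T(P_k)$ (a "non‑rigid" image would have $\pi$‑image a nondegenerate arc through a regular point, hence be decomposable). So $T(P_k)\subseteq P_{j}$ for some $j$; surjectivity of $T$ then gives $T(\mathcal A)=\mathcal A$ and a bijection $\phi$ of $\mathbb Z$ with $T(P_k)=P_{\phi(k)}$. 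Writing $\overline{\alpha_j}$ for the closed arc of $W=\pi^{-1}(\mathcal C_{(1,0)})$ joining $P_j$ to $P_{j+1}$ at its junction points $e_j^+\in P_j$, $e_{j+1}^-\in P_{j+1}$: the continuum $T(\overline{\alpha_j})$ is locally connected, hence cannot enter the interior of any pseudo‑arc (that would force an arc inside some $P_m$, contradicting hereditary indecomposability), so it lies in a single $\overline{\alpha_{m}}$; since it must join $P_{\phi(j)}$ and $P_{\phi(j+1)}$, these are consecutive. Thus $\phi$ is monotone; an order‑reversing $\phi$ would be an index‑involution, giving $T^2(P_k)=P_k$, a contradiction. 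Hence $\phi(k)=k+c$ for a fixed $c\ne 0$, $T(\overline{\alpha_j})\subseteq\overline{\alpha_{j+c}}$, and $T$ matches up junction points.

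\emph{Step 3 (descent to the solenoid) and conclusion.} The natural endgame is to show that $T$ descends along $\pi$, i.e. maps each fiber of $\pi$ into a fiber of $\pi$; equivalently, that $T|_{P_k}$ is compatible with the natural projection $P_k\to\{1\}\times I_k$. Granting this, $T$ induces a minimal continuous self‑map $\bar T$ of the $2$‑adic solenoid with $\bar T(\{1\}\times I_k)=\{1\}\times I_{k+c}$; since a minimal map of the $2$‑adic solenoid is a homeomorphism (the solenoidal analogue of the Auslander–Katznelson theorem: via $\check H^1(X)=\mathbb Z[1/2]$, a self‑map of degree $\ne\pm2^n$ has a periodic point, as $(d-1)x=-b$ is always solvable because multiplication by any nonzero integer is a finite‑to‑one surjection of $X$), $\bar T$ must equal $\Phi_\alpha^{c}$. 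Then $\pi\circ(H^{-c}\circ T)=\pi$, so $H^{-c}\circ T$ is the identity on the dense set $Y\setminus\mathcal A$, whence $H^{-c}\circ T=\mathrm{id}_Y$ and $T=H^{c}$ is a homeomorphism. The main obstacle is exactly this descent: forcing a minimal $T$ — which Steps 1–2 only pin down up to an order‑preserving permutation of the pseudo‑arcs — not to "shear" a $P_k$ across the projection $P_k\to\{1\}\times I_k$. This is where the minimality of $T$ on all of $Y$, the precise choice of the bonding maps $g$, and property (vi) (the shrinking $\diam H^i(\pi^{-1}(x))\to 0$) must be used in an essential way; I would expect to establish fiber‑compatibility by playing density of the $T$‑orbit of a point of $Y\setminus\mathcal A$ against smallness of the $\pi$‑fibers inside each $P_k$, the remaining steps being a formal consequence.
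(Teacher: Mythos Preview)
Your outline diverges substantially from the paper's argument and contains genuine gaps, the most serious being in Step~2. You assert that $T(P_k)$ is a nondegenerate hereditarily indecomposable subcontinuum, hence lies in some $P_j$; but the continuous image of a pseudo-arc can be \emph{any} arc-like continuum (Fearnley \cite{Fearnley}), so nothing prevents $T(P_k)$ from being an arc, or from being a set of the form $C(k_1,k_2)$ (a chain of arcs and pseudo-arcs). The paper notes this explicitly: ``even though an arc cannot be mapped onto a pseudo-arc, the converse does not hold \ldots\ This makes our considerations more delicate then for Slovak spaces.'' Your parenthetical justification (``a `non-rigid' image would have $\pi$-image a nondegenerate arc through a regular point, hence be decomposable'') only shows that if $T(P_k)\not\subseteq P_j$ then $T(P_k)$ is decomposable --- which is perfectly possible, since you have no a priori reason for $T(P_k)$ to be hereditarily indecomposable. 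The nondegeneracy argument via semi-openness is also unconvincing: $P_k$ has empty interior in $Y$, so semi-openness says nothing about $T(P_k)$ directly. Once Step~2 fails, the shift structure $\phi(k)=k+c$ is unsupported and everything downstream collapses.

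Step~3 you yourself flag as ``the main obstacle'', and indeed it is not proved: you give no mechanism forcing fiber-compatibility of $T|_{P_k}$ with $\pi$, and the subsequent identification $\bar T=\Phi_\alpha^{\,c}$ is also unjustified (knowing $\bar T(\{1\}\times I_k)=\{1\}\times I_{k+c}$ setwise does not determine $\bar T$ pointwise on a dense set). The paper bypasses all of this: it never descends to the solenoid. Instead it shows the special composant $W$ is $G$-invariant, compactifies (a one-to-one image of) $W$ to an arc-like continuum to dispose of the orientation-reversing case via the fixed point property \cite{Hamilton}, and in the orientation-preserving case uses the null-sequence property $\diam P_k\to 0$ to control $G(C(k,k))$ for large $|k|$: either the induced index shift is eventually $k\mapsto k+m_0$, forcing $G=F^{m_0}$ by the coincidence-point property of arc-like continua, or some $C(-k_0,k_0)$ is $G$-invariant, again yielding a fixed point. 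This argument only needs arc-likeness and the null-sequence property, and never requires knowing where individual $P_k$ land.
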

\begin{proof}
Let $F:Y\to Y$ be the generator of the homeomorphism group $H(Y)$ and $G:Y\to Y$ be a continuous minimal noninvertible surjection, fixed once and for all. We start with the following straightforward observations, which we include for completness. 

\begin{clm}\label{cl:comp-comp}
	Assume $K$ is a composant of $Y$. Then either $G(K)=Y$ or $G(K)$ is a~subset of a~composant of $Y$.
\end{clm} 
\begin{proof}(of Claim \ref{cl:comp-comp})
	Assume that $G(K)\neq Y$ and $K$ is mapped by $G$ to two different composants $K_1\neq K_2$. Since every composant in $Y$ is connected and $G$ is continuous it holds that $K_1 \cap K_2 \neq \emptyset$. However, since $Y$ is an indecomposable continuum, it consists of pairwise disjoint composants, therefore $K_1=K_2$. 
\end{proof}

\begin{clm}\label{clm:f(W)=W}
 Let $W$ be the special composant of $Y$, that contains a null sequence of pseudo-arcs. Then $G^{-1}(W)=W=G(W)$.
\end{clm}
\begin{proof}(of Claim~\ref{clm:f(W)=W})
	Let $\mathcal{K}$ denote the family of pathwise connected composants of $Y$. It consists of all composants of $Y$ but $W$. Each composant in $\mathcal{K}$ is dense in $Y$ while path components of $W$ are not dense. $G(K)\neq Y$ for any $K\in\mathcal{K}$ since $Y$ is not pathwise connected. Also $G(W)\neq Y$ since $W$ has no dense path components. Hence for any $K \in \mathcal{K}$ we have $G(K) \in \mathcal{K}$ which implies that $G^{-1}(W)=W$ and, by surjectivity, $G(W)=W$.
\end{proof}
By $(I_n)_{n \in \mathbb{Z}}$ denote the path components of $W$, keeping in mind that every pair of consecutive arcs $I_n$ and $I_{n+1}$ is joined by a pseudo-arc $P_{n}$. 
Let $C(k_1,k_2)=\bigcup_{n=k_1}^{k_2}I_n\cup P_{n}$ for extended integers $-\infty\leq k_1\leq k_2\leq \infty $. In particular $W=C(-\infty,\infty)$, and for any integers $k_1,k_2$ the continuum $C(k_1,k_2)$ is arc-like. Note that even though an arc cannot be mapped onto a pseudo-arc, the converse does not hold, see \cite{Fearnley}. In fact the pseudo-arc can be mapped onto any set homeomorphic to $C(k_1,k_2)$, where $k_1,k_2\in\mathbb{Z}$. This makes our considerations more delicate then for Slovak spaces in the proof of \cite[Lemma 6]{DST}.

If $G|_W$ is orientation reversing (i.e. $\lim_{r\to\pm\infty}G(r)=\mp\infty$), then since a one-to-one image of $W$ compactified by $-\infty$ and $\infty$ becomes an arc-like continuum, with $G$ extending to this compactification by setting  $G(-\infty)=\infty,G(\infty)=-\infty$,  and arc-like continua have the fixed point property \cite{Hamilton}, $G|_W$ must have a fixed point. Therefore we may assume that $G$ preserves orientation. Now note that if $G(I_n)\cap P_m\neq\emptyset $ for some $n,m\in\mathbb{Z}$ then $G(I_n)\cap P_m$ is a single point, as an arc cannot be mapped onto any nondegenerate subcontinuum of the pseudo-arc (the latter contains no locally connected continua). In addition, since $\{P_k\}_{k\in\mathbb{Z}}$ forms a null sequence we must have $\lim_{|k|\to\infty}\diam (G(P_k))=0$. Therefore there exists an $N\in\mathbb{N}$ such that for all $|k|>N$ there exists an integer $m_k$ such that $G(C(k,k))\subseteq B_{\epsilon_k}(C(m_k,m_k))$, where $\lim_{|n|\to\infty}\epsilon_k=0$ (where $B_{\epsilon_k}(C(m_k,m_k))$ denotes an $\epsilon_k$-neighborhood of $C(m_k,m_k)$). If there exists an $m_0\in \Z$ such that, either for all $k>N$ we have $m_k=k+m_0$, or for all $k<-N$ we have $m_k=k+m_0$, then $G$ agrees with $F^{m_0}$ on a dense set (this follows from the fact that arc-like continua have a coincidence point property \cite[Theorem 12.29]{Nadler}, and the fact that $\bigcup_{n=-\infty}^{-k-1} P_n$ as well as $\bigcup_{n=k+1}^\infty P_n$ are dense in $Y$), and so $G=F^{m_0}$, contradicting noninvertibility of $G$. Thus, there exists a positive integer $M$ such that for infinitely many $k$ ($|k|>M$) we must have $m_k=m_{k+1}$. Therefore there exists a $k_0>M$ such that $G(C(-k_0,k_0))\subset C(-k_0,k_0)$. Since $C(-k_0,k_0)$ is arc-like, it follows that $G$ has a fixed point in $C(-k_0,k_0)$ which contradicts minimality and the proof is complete.
\end{proof}
\begin{thm}\label{cor}
There exist minimal spaces without minimal noninvertible maps, that are neither $\mathbb{S}^1$ nor Slovak.
\end{thm}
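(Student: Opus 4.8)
The plan is to read the statement off from Theorems \ref{A} and \ref{hom} together with one elementary topological observation. By Theorem \ref{A} there is a continuum $Y$ which is an almost Slovak space but not a Slovak space. In particular $Y$ carries the minimal homeomorphism $H$, so $Y$ is a minimal space, and since it is not Slovak its homeomorphism group properly contains $\{H^n:n\in\mathbb{Z}\}$ — indeed it also contains the nonminimal orientation reversing involution $a=\pi^{-1}\circ A\circ\pi$ — so $Y$ is genuinely not a Slovak space.

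Next I would invoke Theorem \ref{hom}: every minimal map on $Y$ is a homeomorphism. Hence $Y$ admits no minimal noninvertible map, and combined with the previous paragraph this exhibits $Y$ as a minimal space without minimal noninvertible maps. It then only remains to check that $Y$ is not homeomorphic to $\mathbb{S}^1$, and this is immediate from the structure of $Y$ established in the proof of Theorem \ref{A}: $Y$ is an indecomposable continuum (it splits into uncountably many pairwise disjoint composants, among them the distinguished composant $W$ carrying a null sequence of pseudo-arcs), whereas $\mathbb{S}^1$ is a decomposable, locally connected continuum; equivalently, $Y$ contains a nondegenerate pseudo-arc subcontinuum and $\mathbb{S}^1$ does not. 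Thus $Y\not\simeq\mathbb{S}^1$, which completes the proof.

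Since all three points are essentially bookkeeping, there is no real obstacle here: the entire content of the corollary is carried by Theorems \ref{A} and \ref{hom}. If one wished to strengthen the statement to ``uncountably many pairwise non-homeomorphic such spaces,'' the only additional input needed would be to run the construction of Theorem \ref{A} over an uncountable family of pairwise non-conjugate minimal adding-machine bases (as is done for the analogous clause (D2) of Theorem \ref{thm:decomposable}) and to note that the semiconjugate base system is a homeomorphism invariant of $Y$; but this refinement is not required for the form stated above.

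\begin{proof}
By Theorem \ref{A} there exists an almost Slovak space $Y$ that is not Slovak; in particular $Y$ is a minimal space. By Theorem \ref{hom} every minimal map on $Y$ is a homeomorphism, so $Y$ admits no minimal noninvertible map. Finally, by the construction in the proof of Theorem \ref{A}, $Y$ is an indecomposable continuum containing nondegenerate pseudo-arc subcontinua, hence $Y$ is not homeomorphic to $\mathbb{S}^1$.
\end{proof}
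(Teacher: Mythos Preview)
Your proposal is correct and follows essentially the same approach as the paper, which simply states that the result is a consequence of Theorems \ref{A} and \ref{hom}. You are merely more explicit than the paper in verifying that $Y\not\simeq\mathbb{S}^1$; this is also immediate from the fact that $Y$ is almost Slovak, since the homeomorphism group of $\mathbb{S}^1$ is uncountable.
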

\begin{proof}
This is a consequence of Theorem \ref{A} and Theorem \ref{hom}. 
\end{proof}	
\section{Minimal spaces with degenerate homeomorphism groups}\label{section:degenerate}
\begin{thm}\label{thm:degenerate}
	There exist uncountably many minimal spaces with degenerate homeomorphism groups. In addition, for any real number $r\geq 0$ there exists such a~space that admits a minimal map with topological entropy $r$. 
\end{thm}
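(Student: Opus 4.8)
The plan is to modify the construction of Theorem~\ref{A} (equivalently that of \cite[Theorem 3.1]{BCO}) by \emph{breaking the orientation-reversing symmetry}, so that the resulting continuum admits \emph{no} nontrivial homeomorphism at all, and simultaneously to carry an entropy parameter through the construction. First I would revisit the suspension picture: instead of starting from the $2$-adic odometer (which admits the algebraic involution $A$), I would start from a minimal homeomorphism $(C,h)$ of the Cantor set and a minimal suspension $\Phi_\alpha$ whose rigidity one can control, and run the inverse-limit surgery of Section~\ref{subsec:AlmostSlovak}: remove the orbit $\mathcal{O}(q)$ of a well-chosen point $q$ and compactify each hole by a pseudo-arc, obtaining $(\mathbb{Y},H_\mathbb{Y})$ with the properties (iii)--(vi) of Theorem~\ref{A} but \emph{without} the auxiliary reversing homeomorphism $a$. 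The key point is that every homeomorphism $\varphi$ of $\mathbb{Y}$ must permute the composants, fix the distinguished composant $W_\mathbb{Y}$ (it is the unique one that is not a one-to-one continuous image of $\mathbb{R}$, by (iv)--(vi)), hence descend to a homeomorphism of $X=\varprojlim$ through the almost one-to-one factor map $\pi$, and therefore — since $X$ is chosen so that its only homeomorphisms are powers of $\Phi_\alpha$ times possibly the order-reversing flip — must be either a power of $H_\mathbb{Y}$ or an orientation-reversing map of $W_\mathbb{Y}$. The latter is killed exactly as in Theorem~\ref{hom}: an orientation-reversing self-map of $W_\mathbb{Y}$ extends over the two-point compactification to a self-map of an arc-like continuum, which has a fixed point \cite{Hamilton}, contradicting minimality; and an orientation-preserving one that is not a translation by the integer shift can be excluded by the null-sequence argument on the pseudo-arcs together with the coincidence-point property for arc-like continua \cite[Theorem 12.29]{Nadler}. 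So $\mathrm{H}(\mathbb{Y})=\{H_\mathbb{Y}^n:n\in\mathbb{Z}\}$; to collapse this to the trivial group one arranges that $H_\mathbb{Y}$ itself be excluded as a homeomorphism — this is the crucial new idea — by making the two ends of $W_\mathbb{Y}$ \emph{topologically distinguishable}, e.g. compactifying the holes on the forward orbit of $q$ by pseudo-arcs and those on the backward orbit by some other, non-homeomorphic arc-like continuum (or by pseudo-arcs carrying different internal combinatorics), so that no homeomorphism can shift the sequence of holes; then $\mathrm{H}(\mathbb{Y})=\{\mathrm{id}\}$ while $H_\mathbb{Y}$, which need not be a homeomorphism of $\mathbb{Y}$ but only of an intermediate space, or which is replaced by a minimal \emph{map}, still witnesses minimality.

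For the uncountability and the entropy clause I would encode a real parameter into the construction. The standard device (used already in \cite{DST,BCO}) is to vary the point $q$, or the speed $\alpha$, or the internal structure of the inserted continua, along an uncountable family producing pairwise non-homeomorphic spaces; to get entropy exactly $r\ge 0$ I would, inside the ``hole'' that has been opened up, insert not merely a pseudo-arc but a factor carrying a subsystem of prescribed entropy — concretely, replace the trivial identity behaviour on part of the removed region by a map whose inverse limit realizes a given topological entropy, for instance by splicing in a linear full branch of slope $e^{r}$ on a subinterval (so that the one-dimensional factor has entropy $r$) while keeping the rest of the map the identity, so that the surgery does not disturb minimality or rigidity. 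One then checks, via the variational principle or directly via the definition of entropy for the natural extension, that the resulting minimal map $H$ on the resulting minimal space has $\htop(H)=r$; the lower bound comes from the embedded full-branch factor and the upper bound from the fact that outside a small neighbourhood of the orbit $\mathcal{O}(q)$ the map is (semi-conjugate to) a zero-entropy suspension and the inserted pieces are uniformly small.

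In the write-up the order of steps would be: (1) set up the base minimal system and state the needed rigidity of $X$; (2) perform the asymmetric inverse-limit surgery, recording properties (iii)--(vi) and the asymmetry of the two ends of $W$; (3) produce the minimal map $H$ (arguing, as in \cite[Theorem 3.1]{BCO} and Theorem~\ref{A}, via the telescoping $f^{(m)}_{n+1}\circ f^{(m)}_n$ trick, that $H$ is an almost one-to-one extension of the base map); (4) prove $\mathrm{H}(Y)=\{\mathrm{id}\}$ by the composant/arc-like fixed-point argument, now also ruling out the shift by the asymmetry; (5) incorporate the real parameter and verify non-homeomorphism across the family and the entropy value. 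The main obstacle I expect is step~(4) combined with the entropy insertion in step~(5): one must insert a positive-entropy one-dimensional factor into the hole \emph{without} creating new homeomorphisms of $Y$ (a high-entropy interval map has a large homeomorphism-poor but map-rich structure, and one must be sure the inserted continuum is still rigid enough) and \emph{without} destroying minimality of $H$ or the pseudo-arc/arc-like structure that powers the fixed-point arguments; reconciling ``enough dynamical complexity to have entropy $r$'' with ``so topologically rigid that the only self-homeomorphism is the identity'' is the delicate heart of the theorem.
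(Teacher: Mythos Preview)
Your high-level instinct --- break the forward/backward symmetry along the special composant so that no homeomorphism can shift the null sequence of inserted continua --- is exactly the right one, and your treatment of the \emph{non-shift} homeomorphisms (orientation-reversing via the fixed-point property of arc-like continua, orientation-preserving non-shifts via the coincidence-point argument) is fine and matches the paper's earlier sections. The gap is in the two places you yourself flag as delicate.

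First, the mechanism producing the minimal \emph{map} on the asymmetric space is left unspecified. If you literally compactify the forward holes by pseudo-arcs and the backward holes by a non-homeomorphic continuum, the inverse-limit machinery of \cite[Theorem~3.1]{BCO} no longer hands you a self-map: that construction inserts the \emph{same} inverse-limit continuum at every orbit point and yields a homeomorphism; there is no obvious way to get a continuous minimal map that carries one type of inserted continuum onto the other at the transition index. The paper sidesteps this entirely: it first inserts the \emph{same} asymmetric continuum (a one-point union of a $3$-fold Knaster continuum and a pseudo-arc) at every orbit point, obtaining a genuine Slovak space $\mathcal{Z}$ with minimal homeomorphism $G$; \emph{then} it passes to a quotient $\mathcal{Z}'=\mathcal{Z}/\!\!\sim$ by collapsing a $G$-forward orbit $\{G^n(p_0):n\ge 0\}$ of sub-pseudo-arcs to points. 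The induced factor $G'$ is automatically minimal (factors of minimal maps are minimal) and is noninvertible exactly at $\pi(p_0)$, while any homeomorphism of $\mathcal{Z}'$ must respect the Knaster/pseudo-arc junction points $\pi(o_n)$, hence agree with $\pi\circ G^k$ on the dense set $\{\pi(o_n):n<0\}$ and therefore equal the identity. This ``Slovak then quotient'' trick is the missing idea in your proposal.

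Second, your entropy mechanism does not work. The inserted continua form a null sequence, so splicing a slope-$e^r$ branch into the hole contributes nothing to $\htop$; any entropy created inside a sequence of sets whose diameters tend to $0$ is washed out. In the paper the entropy comes from the \emph{base}: one chooses the minimal suspension so that the generating homeomorphism $G$ on $\mathcal{Z}$ already has $\htop(G)=r$ (cf.\ \cite[Theorem~5]{DST}), and then $\htop(G')=r$ since $G'$ is an almost one-to-one factor of $G$. This also gives the uncountable family immediately, by varying the base suspension rather than the inserted continua.
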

\begin{proof}
	We modify the construction of the space $Y$ from \cite[Theorem 3.1]{BCO} (so the one described in the present paper in Subsection~\ref{subsec:AlmostSlovak} and adapted in the proof of Theorem \ref{A}) in the following way. 
	Let $\psi: [0,1]\to[0,1]$ be a piecewise linear map defined by:
	\begin{equation}\label{eq:pseudoarc+Knaster}
	\psi(x):= \begin{cases}
	3x & \text{ if } x\in [0,1/3];\\
	-3x+2 & \text{ if } x\in [1/3,2/3];\\
	3x-2 & \text{ if } x\in [2/3,1];
	\end{cases}
	\end{equation}
	Note that $K=\varprojlim ([0,1],\psi)$ is a $3$-fold Knaster continuum \cite{Rogers}, which has two endpoints, $(0,0,\ldots)$ and $(1,1,\ldots)$. Let $f:[0,1]\to [0,1]$ be the Henderson map from \cite{Hen}.
	We modify the function $g:[-1,2]\to[-1,2]$ from Subsection~\ref{subsec:AlmostSlovak} to $g':[-1,2]\to[-1,2]$ only on the subinterval $[0,1]$ (we rescale $\psi$ to be defined on the interval $[1/2,1]$):
	
	\begin{equation}\label{eq:pseudoarc+Knaster}
	g'|_{[0,1]}(x):= \begin{cases}
	f(x) & \text{ if } x\in [0,1/2];\\
	\psi(x) & \text{ if } x\in [1/2,1].
	\end{cases}
	\end{equation}
	Note that Theorem 3.1. from \cite{BCO} still holds with such a modification  and as an outcome we obtain a continuum $\mathcal{Z}$. Again, there is a special composant $W$ of $\mathcal{Z}$, that contains a family of nowhere locally connected continua $\{L_{n}\}_{n\in\mathbb{Z}}$ such that $L_n=K_n\cup P_n$, where $K_n$ and $P_n$ are the $3$-fold Knaster continuum and the pseudo-arc respectively, for each $n\in\mathbb{Z}$. Let $\{o_n\}=K_n\cap P_n$ for each $n\in\mathbb{Z}$. It follows from Theorem 3.1. in \cite{BCO} that $\mathcal{Z}$ is an almost Slovak space. In addition $\mathcal{Z}$ does not admit an orientation reversing homeomorphism, so $\mathcal{Z}$ is in fact a Slovak space that carries a~minimal homeomorphism $G$, generating the homeomorphism group of $\mathcal{Z}$. We will modify $\mathcal{Z}$ to obtain a space that admits no minimal homeomorphism. It follows again from Theorem 3.1 in \cite{BCO} that $G(o_n)=o_{n+1}$ for all $n\in\mathbb{Z}$. For every $n \in \mathbb{N}$ and every pseudo-arc $P_n \subset W$ we choose a proper subpseudo-arc  $p_n\subset P_n$ as follows.  Let $p_0\subset P_0$ be a pseudo-arc such that $p_0\cap K_0=\{o_0\}$ and $p_n=G^n(p_0)$ for each $n \geq 0$. Since the pseudo-arcs $\{p_n\}_{n \geq 0}$ form a null sequence, we can shrink each $p_n$ to the point $o_n$ ($n\geq 0$) to obtain a~new space $\mathcal{Z}'$ on which we have a map $G'$ which is semi-conjugate to the map $G$ on $\mathcal{Z}$, with semi-conjugacy given by the quotient map $\pi:\mathcal{Z}\to \mathcal{Z}'$, with $\pi(p_n)=o_n$ for all $n\geq 0$, see the diagram below.
	
	\begin{center}
		\begin{tikzpicture}\label{diagram:semiconjugacy}
		\matrix (m) [matrix of math nodes,row sep=3em,column sep=4em,minimum width=2em]
		{
			\mathcal{Z} & \mathcal{Z} \\
			\mathcal{Z}' & \mathcal{Z}' \\};
		\path[-stealth]
		(m-1-1) edge node [left] {$\pi$} (m-2-1)
		edge node [above] {$G$} (m-1-2)
		(m-2-1.east|-m-2-2) edge node [above] {$G'$} (m-2-2)
		(m-1-2) edge node [right] {$\pi$} (m-2-2);
		
		\end{tikzpicture}
		
	\end{center}
	
	Note that $G'$ is minimal, as a factor of the minimal homeomorphism $G$. However $G'$ is not invertible at $\pi(p_0)$. For each $n\in\mathbb{Z}$ we again have that $\pi(L_n)$ is a one-point union of the $3$-fold Knaster continuum $\pi(K_n)$ and the pseudo-arc $\pi(P_n)$, with the common point $\pi(o_n)=\pi(p_n)$ for each $n\geq 0$. The fact that $\pi(P_n)$, for each $n\geq 0$, is a pseudo-arc follows from Theorem 4 in \cite{B}, since $\pi(P_n)=P_n/p_n$ for all $n\geq 0$. Now suppose that $H:\mathcal{Z}'\to \mathcal{Z}'$ is a homeomorphism. Then  $H(\pi(K_n))=\pi(K_{n+k})$, $H(\pi(P_n))=\pi(P_{n+k})$ and so $H(\pi(o_n))=\pi(o_{n+k})$ for some $k\in\mathbb{Z}$. But then $\pi(G^k(o_n))=H(\pi(o_n))$ for all $n<0$. Since $\{\pi(o_n)\}_{n<0}$ is dense in $\mathcal{Z}'$  (because $(o_n)_{n<0}$ forms a dense orbit of a minimal homeomorphism $G$ in $Z$), and any map is uniquely determined on a dense set, it follows that $\pi\circ G^j=H\circ \pi$ for some $j\geq 0$. Consequently $H$ is noninvertible at $\pi(o_0)=\pi(p_0)$ unless $j=0$. This shows that $H=\operatorname{id}_{\mathcal{Z}'}$ and completes the proof. The construction yields uncountably many nonhomeomorphic examples, as we can start with any minimal suspension. For any real number $r\geq 0$, this minimal suspension can be chosen so that the topological entropy of the generating homeomorphism $G:\mathcal{Z}\to \mathcal{Z}$, and thus $G'$, is precisely $r$ (cf. Theorem 5 in \cite{DST}).
\end{proof}
\section{On indecomposability of Slovak spaces}\label{section:decomposable}
In this section we answer the following question from \cite{DST} in the negative. 

\begin{que}(Question 2 from \cite{DST}) Is every Slovak space an indecomposable continuum?
\end{que}

\begin{thm}\label{thm:decomposable}
	Each of the following classes of spaces contains an uncountable family of decomposable continua.
	\begin{itemize}
		\item[(D1)] Slovak spaces,
		\item[(D2)] almost Slovak spaces that are not Slovak,
		\item[(D3)] minimal spaces with degenerate homeomorphism groups.
	\end{itemize}
\end{thm}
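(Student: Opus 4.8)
\emph{Plan.} The key observation is that every continuum produced by the constructions recalled above --- those of \cite{DST}, of \cite{BCO}, of the proof of Theorem~\ref{A}, and of the proof of Theorem~\ref{thm:degenerate} --- comes equipped with a \emph{monotone} surjection onto a base continuum $X$ (the projection $\pi$ onto the solenoid, respectively onto the suspension; in the degenerate case one post-composes with the monotone quotient $\mathcal{Z}\to\mathcal{Z}'$ that collapses the subpseudo-arcs $p_n$). Now if $\pi\colon Y\to X$ is a monotone surjection and $X=A\cup B$ is a decomposition into two distinct proper subcontinua, then $Y=\pi^{-1}(A)\cup\pi^{-1}(B)$ is a decomposition of $Y$ into two distinct proper subcontinua: $\pi^{-1}$ of a subcontinuum is a subcontinuum because $\pi$ is monotone and closed, properness is preserved because $\pi$ is onto, and $\pi^{-1}(A)=\pi^{-1}(B)$ forces $A=B$. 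For the degenerate case one pulls the decomposition of the base all the way back to $\mathcal{Z}$, observes that it consists of $p_n$-saturated sets (each $p_n$ lies in a single fibre of $\mathcal{Z}\to X$, hence on one side of the decomposition), and pushes it forward to $\mathcal{Z}'$. Consequently, to establish (D1)--(D3) it suffices to rerun the three constructions over a base $X$ that is itself a \emph{decomposable} continuum carrying a minimal flow together with a leaf-arc whose forward and backward iterates form a null sequence.

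For the base I would take $X=S\times\mathbb{S}^1$, where the second factor carries a rotation flow rescaled by an irrational number $c$ chosen --- by a standard spectral (Furstenberg disjointness) argument, which rules out at most countably many $c$ --- so that the product flow $\Phi_t$ is minimal on $X$, and where $S$ is one of the previously constructed spaces: a Slovak space for (D1), an almost Slovak space that is not Slovak (e.g.\ the space $Y$ of Theorem~\ref{A} or a space from \cite{BCO}) for (D2), and either one for (D3). The splitting $X=(S\times J_1)\cup(S\times J_2)$ for two closed arcs $J_1,J_2$ covering $\mathbb{S}^1$ exhibits $X$ as decomposable, so by the previous paragraph every space we build over $X$ will be decomposable. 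Over $X$ one performs verbatim the construction of the proof of Theorem~\ref{A} --- taking the arc $I$ to be a small sub-arc of a flow line of $\Phi$ and blowing up its orbit under the minimal almost one-to-one lift $F$ of $\Phi_\alpha$ to pseudo-arcs --- and, for (D3), additionally inserts the Henderson-map/Knaster-continuum modification $g'$ and the subpseudo-arc collapse of the proof of Theorem~\ref{thm:degenerate}. For (D2) one moreover chooses $I$ inside a flow line left invariant by an orientation-reversing involution of $X$ (such an involution exists: combine an orientation-reversing involution of $S$, present when $S$ is almost Slovak as in Theorem~\ref{A}, with a reflection of $\mathbb{S}^1$), so that the blown-up space inherits, exactly as in Theorem~\ref{A}, a nonminimal orientation-reversing involution and is thus almost Slovak but not Slovak. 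Letting $S$ vary --- in particular prescribing the topological entropy of its generating homeomorphism, and hence of $\Phi_\alpha$, cf.\ Theorem~5 in \cite{DST} --- yields uncountably many pairwise non-homeomorphic examples in each of the three classes.

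The remaining task, and the main obstacle, is to compute the homeomorphism group of the blown-up continuum $Y$. Granting for the moment that every homeomorphism $H$ of $Y$ descends, through the monotone collapse $\pi\colon Y\to X$, to a homeomorphism $\bar H$ of $X=S\times\mathbb{S}^1$ that permutes the dense orbit of leaf-arcs $\{F^n(I)\}$ \emph{and} respects the circle fibration $X\to S$, the dynamical part is short: the $S$-component of $\bar H$ is a homeomorphism of $S$, hence (as $H(S)$ is cyclic, respectively an extension of $\mathbb{Z}_2$ by a cyclic group) a power $T^k$ of the generator, possibly composed with the orientation-reversing involution; matching indices along the orbit then forces the $\mathbb{S}^1$-component of $\bar H$ to agree on a dense subset of $X$ with the rotation $R_{kc\alpha}$ --- a reflection-type component is excluded because it would have to agree on a dense set with a rotation --- so that $\bar H=\Phi_\alpha^k$ (composed with the involution in case (D2)); in case (D3) the additional collapse of the $p_n$ forces $k=0$, exactly as in the proof of Theorem~\ref{thm:degenerate}. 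The genuinely delicate points are: (i) in the decomposable setting the blown-up pieces $P_n$ are no longer singled out in $Y$ merely as the unique pseudo-arcs, nor as lying in one distinguished composant, since $X$ has a single composant and, containing product cells, contains many pseudo-arcs; and (ii) one must exclude homeomorphisms of $S\times\mathbb{S}^1$ that do not respect the circle fibration. I expect (i) to be handled by recovering the $P_n$ as, say, the one-dimensional hereditarily indecomposable continuum-wise components of the set on which $Y$ fails to have a local product structure over a zero-dimensional set, and (ii) by a topological characterization of the fibres $\{s\}\times\mathbb{S}^1$ (using that the chosen $S$ contains no simple closed curve, so that the fibres are detected among the essential circles of $X$) together with the extreme rigidity of $S$; making both of these precise, and uniformly across (D1)--(D3), is where the real work lies.
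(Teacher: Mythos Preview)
Your approach diverges from the paper's and runs into a genuine obstruction. You take $X=S\times\mathbb{S}^1$ with $S$ a Slovak (resp.\ almost Slovak) space and propose to rerun the blow-up of Theorem~\ref{A} along a flow line of a minimal product flow $\Phi_t$ on $X$. But a Slovak space carries \emph{no} nontrivial flow: since $\mathrm{H}(S)=\{T^n:n\in\mathbb{Z}\}$, any flow $\psi_t$ on $S$ satisfies $\psi_t=T^{n(t)}$ with $t\mapsto n(t)$ continuous and integer-valued, hence constant, hence $\psi_t\equiv\mathrm{id}_S$; the same argument applies to almost Slovak $S$. Thus every flow on $S\times\mathbb{S}^1$ is $\mathrm{id}_S\times(\text{rotation})$, which is not minimal (each fibre $\{s\}\times\mathbb{S}^1$ is invariant). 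With no minimal flow there are no flow lines in which to place the arc $I$, and the null-sequence-of-arcs step of \cite[Theorem~2.3]{BCO} on which the whole construction rests has no substitute that you supply. Independently, you yourself flag that the computation of the homeomorphism group of the blown-up space --- your points (i) and (ii) --- is not carried out; so even modulo the flow issue what you have is a programme, not a proof.

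For contrast, the paper's route sidesteps both difficulties. It forms $Z=X\times Y$ with $X,Y$ two Slovak spaces built as in Section~\ref{section:degenerate}, with generators $h_X,h_Y$ coming from rationally independent time maps; then $(h_X,h_Y)$ is minimal on $Z$, the product is decomposable for free, and --- because $\mathrm{H}(X),\mathrm{H}(Y)$ are already known to be cyclic --- every homeomorphism of $Z$ has the form $(h_X^\ell,h_Y^j)$. One then collapses to points the null family of ``diagonal'' products $S_{n,n}\subset W_X\times W_Y$; only the homeomorphisms with $\ell=j$ survive the quotient, so the resulting decomposable continuum $Z'$ has cyclic homeomorphism group generated by the image of $(h_X,h_Y)$. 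No new blow-up is performed and no homeomorphism group is computed from scratch; rigidity is inherited from the factors and then sharpened by the collapse. The variants for (D2) and (D3) follow the same template combined with Sections~\ref{sec:AlmostSlovakNotSlovak} and~\ref{section:degenerate}.
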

\begin{proof}
	We start by proving (D1). Let $X$ be a~Slovak space obtained from an irrational flow by $\alpha$, and $Y$ be a~Slovak space obtained from an irrational flow by $\beta$ on a 2-adic solenoid, by the same method as $\mathcal{Z}$ in Section \ref{section:degenerate}, where $\alpha$ and $\beta$ are two irrationals that are rationally independent. In particular, both $X$ and $Y$ contain a special composant that contains a null sequence of the one-point unions of the 3-fold Knaster continuum and the pseudo-arc, which forms an orbit of the generating minimal homeomorphism. Let $W_X$ and $W_Y$ be the special composants of $X$ and $Y$ respectively, and $h_X,h_Y$ be the minimal generating homeomorphisms for the respective Slovak spaces. Consider the space $Z=X\times Y$ and note that it is a~decomposable continuum, as any product of continua is decomposable \cite{Jones}. Note also that $(h_X,h_Y)$ is a minimal homeomorphism of $Z$. However $Z$ is not Slovak, since it admits nonminimal homeomorphisms of the form $(\operatorname{id}_X,h^n_Y)$ and $(h^n_X,\operatorname{id}_Y)$ for $n\in\mathbb{Z}$. We are going to "kill" such homeomorphisms. 
	\begin{figure}[!htbp]
		\begin{center}
			\includegraphics[width=0.45\textwidth]{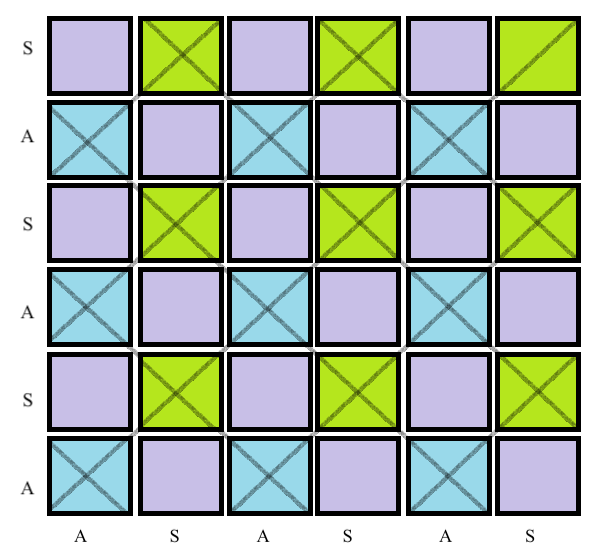}
			\includegraphics[width=0.45\textwidth]{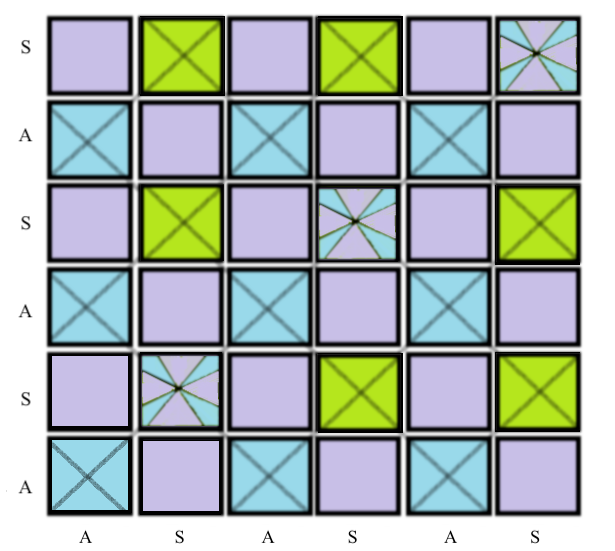}
			\begin{minipage}[c]{0.8\textwidth}
				\begin{center}
					\caption{The sets $W_X\times W_Y$ (left) and $\pi(W_X\times W_Y)$ (right), where $S$ is the one-point union of the 3-fold Knaster continuum and pseudo-arc, and $A$ is an arc.\label{pic:squares}}
				\end{center}
			\end{minipage}
		\end{center}
	\end{figure}
	Consider the subset $W_X\times W_Y$ of $Z$.  This set contains a "chessboard" of special "rectangles" $S_{n,m}$ that are Cartesian squares of the one-point union of the Knaster continuum and the pseudo-arc (there is a~natural correspondence with $\mathbb{Z}^2$ by considering the set of times of the initial flows $\{(n\alpha,m\beta): (n,m)\in\mathbb{Z}^2\}$). There are other "rectangles" that are either homeomorphic to a 2-disk, or to a Cartesian product of an arc with the one-point union of the Knaster continuum and pseudo-arc; see the left picture on Figure~\ref{pic:squares}. Let us consider the family $\mathcal{S}=\{S_{n,n}:n\in\mathbb{Z}\}$, of rectangles on the main diagonal, and an equivalence relation: 
	$$x\equiv y\textrm{ iff } x,y\in S_{n,n} \textrm{ for some } n\in\mathbb{Z}.$$
	Since $\mathcal{S}$ forms a null sequence, the quotient space $Z'=Z/_\equiv$ is a metric continuum, by Proposition 2 and Proposition 3 on p.13-14 in \cite{Daverman}. 
	
	It is easy to see that $Z'$ is decomposable, since if $A\cup B=Z$ with $A,B$ two distinct proper subcontinua of $Z$ then $\pi(A)\cup\pi(B)=Z'$, where $\pi:Z\to Z'$ is the quotient map. If $\pi(A)=\pi(B)$ then $((A\setminus B)\cup (B\setminus A))\subset \bigcup \mathcal{S}$, and since $Z\setminus \bigcup\mathcal{S}$ is dense it follows that $A$ and $B$ are dense in $Z$, and thus $A=B=Z$ which leads to a contradiction. We argue similarly if $\pi(A)\subset \pi(B)$ or $\pi(B)\subset \pi(A)$, since then $\pi(B)=Z'$ or $\pi(A)=Z'$ respectively. These contradictions show that $Z'$ is decomposable. 
	
	Note that the homeomorphism $H=\pi\circ (h_X,h_Y)\circ \pi^{-1}$ is minimal on $Z'$, as factors of minimal maps are minimal.  
	Now we shall show that $H$ is a generator of the homeomorphism group of $Z'$. By the same arguments as in the proof of Theorem 3.2 in \cite{BCO}, any homeomorphism of $Z$ is of the form $(h^\ell_X,h^j_Y)$, for some $\ell,j\in\mathbb{Z}$. Consequently, without loss of generality, we may assume that if $\ell\neq j$ then $j=0$ or $\ell=0$. Suppose by contradiction that there exists a homeomorphism $G:Z'\to Z'$ such that $G\neq H^k$ for all $k\in\mathbb{Z}$. Let $C_X\neq W_X$ be a composant of $X$. Then $G$ is a~translation on $\pi(C_X\times W_Y)$, since this set is homeomorphic with $C_X\times W_Y$ (by the fact that $\pi|(C_X\times W_Y)$ is a~homeomorphism).  Then $\pi\circ (\operatorname{id}_X,h^j_Y)(v)=G\circ \pi(v)$ for each $v\in C_X\times W_Y$ and some $j\in\mathbb{Z}$. But since $C_X\times W_Y$ is dense in $X\times Y$ we must have $\pi\circ (\operatorname{id}_X,h^j_Y)(v)=G\circ \pi(v)$ for all $v\in Z'$. Consequently $G=\pi\circ (\operatorname{id}_X,h^j_Y)\circ \pi^{-1}$. But since $(\operatorname{id}_X,h^j_Y)(S_{n,n})=S_{n,n+j}$, for the point $s=\pi(S_{n,n})$ and the nondegenerate continuum $S=\pi(S_{n,n+j})$ we get $G(s)=S$, unless $j=0$, a contradiction. Arguing the same way with $W_X\times C_Y$, where $C_Y\neq W_Y$ is a composant of $Y$ we get that $\ell=0$ and so $G=\operatorname{id}_{Z'}$ which is again a contradiction. So the homeomorphism group of $Z'$ is $\{H^k:k\in\mathbb{Z}\}$ and the proof is complete. Since we have uncountably many choices for $\alpha$ and $\beta$, there exist uncountably many spaces with the desired properties. 
	
	The proofs for (D2) and (D3) are analogous, by combining with the results of Section~\ref{sec:AlmostSlovakNotSlovak} and Section~\ref{section:degenerate}.
\end{proof}
\begin{rem}
	We could have constructed $X$ and $Y$ that contain a null sequence of $\sin(1/x)$-curves instead of Knaster continua with pseudo-arcs in the above proof, and use the same arguments thereafter. Starting with analogous Slovak spaces from \cite{DST} would also suffice.
\end{rem}
\section{Acknowledgements}
We are grateful to A. de Carvalho and P. Oprocha for helpful feedback on the results of the paper, and B. Vejnar for suggesting Question \ref{q1}. We are also grateful to L'. Snoha for many useful conversations on minimal spaces. This work was partially supported by subsidy for institutional development  IRP201824 ”Complex topological structures” from University of Ostrava and by the Faculty of Applied Mathematics AGH UST statutory tasks within subsidy of Polish Ministry of Science and Higher Education. 
J. Boro\'nski was also partially supported by National Science Centre, Poland (NCN), grant "Homogeneity and Minimality in Compact Spaces" no. 2015/19/D/ST1/01184. J. \v Cin\v c was also supported by the Austrian Science Fund (FWF) Schr\"odinger Fellowship stand-alone project J4276-N35.

\end{document}